\newtheorem{theorem}{Theorem}[section]
\newtheorem{lemma}[theorem]{Lemma}
\theoremstyle{definition}
\theoremstyle{remark}
\newtheorem{remark}[theorem]{Remark}
\numberwithin{equation}{section}
\numberwithin{table}{section}
\def\ho{h^{k+1}} 
\def\jj{j = 1, \ldots, N}
\def\ih{\mathcal{I}_h}
\def\jump#1{[\![{#1}]\!]} 
\def\mean#1{\{\!\!\{{#1}\}\!\!\}} 
\def\normI#1{\|{#1}\|_{L^2(I)}}  
\def\intc#1{\int_{I_j}{#1}\mathrm{dx} } 
\def\intid#1{\int_{I}{#1}\mathrm{dx} } 
\begin{document}

\title[Energy conserving ultra-weak DG]{An energy-conserving ultra-weak discontinuous Galerkin method for 
the generalized Korteweg-De Vries equation}

\author{Guosheng Fu}
\address{Division of
Applied Mathematics, Brown University, Providence, RI
 02912}
\curraddr{}
\email{guosheng\_fu@brown.edu}

\author{Chi-Wang Shu}
\address{Division of
Applied Mathematics, Brown University, Providence, RI
 02912}
\curraddr{}
\email{shu@dam.brown.edu}
\thanks{The research of the second author was supported
  by ARO grant W911NF-15-1-0226 and NSF grant DMS-1719410.}

\subjclass[2010]{Primary 65M60, 65M12, 65M15}


\dedicatory{}

\keywords{discontinuous Galerkin method, energy conserving, KdV}

\begin{abstract}
We propose an energy-conserving ultra-weak discontinuous Galerkin (DG) method
for the generalized Korteweg-De Vries (KdV) equation in one dimension.
Optimal a priori error estimate of order $k+1$ is obtained for the semi-discrete scheme 
for the KdV equation without the
convection term on general nonuniform meshes when polynomials 
of degree $k\ge 2$ are used.
We also numerically observe optimal convergence of the method for the KdV equation with linear or nonlinear 
convection terms.

It is numerically observed for the new method to have a superior performance for long-time 
simulations over existing DG methods.

\end{abstract}

\maketitle
\section{Introduction}
In this paper, we present an energy-conserving discontinuous Galerkin
(DG) method for the 
generalized Korteweg-De Vries (KdV) equation:
 \begin{align}
 \label{eq:kdv}
 u_t + f(u)_x + \epsilon u_{xxx} =&\; 0, \quad (x,t) \in (0, 1)\times (0,T],\\
 u(x,0) = &\; u_0(x),
\end{align}
with periodic boundary conditions, 
where $f$ is a (nonlinear) function of $u$, and
$\epsilon\not = 0$ is a constant parameter.

A vast amount of literature can be found on the numerical approximation of 
the above equation, c.f. references cited in \cite{Bona13}. 
All types of numerical methods, including finite difference, finite element, finite
volume and spectral methods have their proponents.  Here, we will confine our attention
in finite element methods, in particular, DG methods.
The DG methods, c.f. \cite{Cockburn00}, belong to a class of finite element methods 
using discontinuous piecewise polynomial
spaces for both the numerical solution and the test functions. 
They allow arbitrarily unstructured meshes, and have compact stencils. Moreover, they
easily accommodate arbitrary $h$-$p$ adaptivity. 

Various DG methods can be applied to solve \eqref{eq:kdv}, including the LDG methods \cite{YanShu02,XuShu10, Chen15},
the direct DG method \cite{Yi13}, and the ultra-weak DG method \cite{ChengShu08,Bona13}. 
Among these cited references, the methods \cite{Bona13, Yi13,Chen15} are energy conserving, while the methods 
\cite{YanShu02, ChengShu08,XuShu10} are energy dissipative.
Energy conserving DG methods were numerically shown \cite{Bona13, Yi13} to outperform their dissipative counterparts for long time simulations.
However, the above cited energy-conserving DG methods are numerically 
observed to be suboptimally convergent for odd polynomial degrees,
while optimal error estimates have only be proven for even polynomial degrees on {\it uniform} meshes 
\cite{Bona13, Chen15} for the equation \eqref{eq:kdv} without 
the convection term, i.e. $f(u)=0$. 
To the best of our knowledge, 
no optimally convergent, energy-conserving DG methods using odd 
polynomial degrees have been available for the equation \eqref{eq:kdv}, 
even numerically, in the literature.

We fill this gap by presenting a new energy conserving DG method that is proven to be optimally convergent for all polynomial degrees 
$k\ge 2$ for the equation \eqref{eq:kdv} without the
convection term ($f(u)=0$) on general nonuniform meshes, and is 
numerically shown to be optimally convergent for the equation \eqref{eq:kdv} with linear or nonlinear convection terms.
Our method is based on the ultra-weak DG formulation \cite{ChengShu08}, and on the idea of 
{\it doubling the unknowns} in our recent work on 
optimal energy conserving DG methods for linear symmetric hyperbolic systems \cite{FuShu18}.
We point out that the choice of ultra-weak DG formulation is not essential, one
can equivalently apply the idea to the LDG method \cite{XuShu10}, and obtain an optimal energy conserving LDG method
for the equation \eqref{eq:kdv}.
We specifically note that the polynomial degree $k\ge 2$ is required for the third order equation \eqref{eq:kdv} in 
the ultra-weak DG formulation, while the LDG method \cite{XuShu10} do not have such order restriction.
The ultra-weak DG method is computationally more efficient over the LDG method due to its compactness, c.f. \cite{ChengShu08}.

The rest of the paper is organized as follows. 
In Section \ref{sec:1d}, we present and analyze
the semi-discrete energy-conserving DG method for  the equation \eqref{eq:kdv}.
Numerical results are reported in Section \ref{sec:num}. Finally, we conclude in 
Section \ref{sec:conclude}.

\section{The numerical scheme}\label{sec:1d}
In this section, we present the energy-conserving DG method
for the generalized KdV equation \eqref{eq:kdv}.
We first present and analyze the scheme for \eqref{eq:kdv} without 
the convection term ($f=0$). An optimal error estimate is obtained for this case.
We then treat the nonlinear convection term using the square entropy-conserving flux \cite{Tadmor87}.

\subsection{Notation and definitions in the one-dimensional case}
In this subsection, we shall first introduce some notation and definitions in the one-dimensional case,
which will be used throughout this section.
\subsubsection{The meshes}
Let us denote by $\ih$ a tessellation of the computational interval $I = [0, 1]$,
consisting of cells $I_j = (x_{j-\frac12},x_{j+\frac12})$ with $1
\le j \le N$, where
$$
0 = x_\frac12 < x_\frac32 < \cdots < x_{N+\frac12} = 1.
$$
The following standard notation of DG methods will be used.
Denote $x_j =
(x_{j-\frac12} + x_{j+\frac12})/2$, $h_j = x_{j+\frac12} -
x_{j-\frac12}$, $h = \max_jh_j$, and $\rho = \min_jh_j$.
The mesh is assumed to be regular in the sense that $h / \rho$ 
is always bounded during mesh refinements, namely, there
exists a positive constant $\gamma$ such that $\gamma h \le \rho \le h$.
We denote by $p_{j+\frac12}^-$ and $p_{j+\frac12}^+$ the values
of $p$ at the discontinuity point $x_{j+\frac12}$, from the left
cell, $I_j$, and from the right cell, $I_{j+1}$, respectively.
In what follows, we employ $\jump p = p^+ - p^-$ and $\mean p = \frac12(p^+ +
p^-)$ to represent the jump and the mean value of $p$ at each element
boundary point. The following discontinuous piecewise polynomials space is chosen
as the finite element space:
\begin{align}
\label{space-1d}
V_h \equiv V_h^{k} = \left\{v \in L^2(I): v|_{I_j} \in P^{k}(I_j),
~~ j = 1, \ldots, N \right\}, 
\end{align}
where $P^{k}(I_j)$ denotes the set of polynomials of degree up to $k\ge 2$ defined on the cell $I_j$.

\subsubsection{Function spaces and norms}
Denote $H^s(I)$ as the space of $L^2$ functions on $I$ whose $s$-th 
derivative is also an $L^2$ function.
Denote $\|\cdot\|_{I_j}$ the standard $L^2$-norm on the cell $I_j$, and 
$\|\cdot\|_I$ the $L^2$-norm on the whole interval.

\newcommand{\bld}[1]{\boldsymbol{#1}}
\subsection{Case $f=0$}
\label{sub:c0}
In this subsection, we consider the KdV equation without convection effect ($f=0$):
\begin{align}
\label{advection1d}
 u_t +  \epsilon u_{xxx} & =0, &&\hspace{-2.8cm}  (x,t)\in I\times (0,T] ,
\end{align}
with a smooth periodic initial condition $u(x,0) = u_0(x)$ for $x\in I$, and 
a periodic boundary condition.

To derive the optimal energy-conserving DG method for this equation, 
we shall following the idea in our recent work \cite{FuShu18} on optimal energy-conserving DG methods
for linear symmetric hyperbolic systems by 
doubling the unknowns with the introduction of an auxiliary {\it zero} function $\phi(x,t) = 0$.
Hence, we consider the following $2\times 2$ system:
 \begin{subequations}
 \label{aux-adv}
\begin{align}
 u_t + \epsilon u_{xxx} & =0, &&\hspace{-2.8cm}  (x,t)\in I\times (0,T] ,\\
 \phi_t -\epsilon \phi_{xxx} & =0, &&\hspace{-2.8cm}  (x,t)\in I\times (0,T] ,
\end{align}
with initial condition $u(x,0)=u_0(x)$ and $\phi(x,0)=0$.
\end{subequations}

We use the ultra-weak DG method of Cheng and Shu \cite{ChengShu08} to discretize the above equations.
The semi-discrete DG method for \eqref{aux-adv} is as follows.
Find, for any time $t \in (0, T]$,
the unique function $(u_h, \phi_h) = (u_h(t), \phi_h(t))
\in V_h^k\times V_h^k$ such that
 \begin{subequations}
 \label{scheme:adv1d}
\begin{align} \label{scheme:adv1d-1}
\intc{(u_h)_tv_h} +D_{u,j}((u_h, \phi_h), v_h) =&\; 0,\\
\label{scheme:adv1d-2}
\intc{(\phi_h)_t\psi_h} 
-D_{\phi,j}((u_h, \phi_h), \psi_h)=&\;0,
\end{align}
holds for all $(v_h, \psi_h)\in V_h^k\times V_h^k$ and all $\jj$,
where the bilinear forms
\begin{align}
\label{op-uxxx}
 D_{u,j} = &
  -\epsilon\intc{u_h (v_h)_{xxx}}\\
&\;+\epsilon\widehat{u}_h (v_h)_{xx}^-\Big|_{j+1/2}
-\epsilon\widehat{u}_{h,x} (v_h)_{x}^-\Big|_{j+1/2}
+\epsilon\widehat{u}_{h,xx} (v_h)^-\Big|_{j+1/2}\nonumber\\
&\;+\epsilon\widehat{u}_h (v_h)_{xx}^+\Big|_{j-1/2}
-\epsilon\widehat{u}_{h,x} (v_h)_{x}^+\Big|_{j-1/2}
+\epsilon\widehat{u}_{h,xx} (v_h)^+\Big|_{j-1/2}\;\nonumber\\
\label{op-pxxx}
D_{\phi,j} = &
  -\epsilon\intc{\phi_h (\psi_h)_{xxx}}\\
&\;+\epsilon\widehat{\phi}_h (\psi_h)_{xx}^-\Big|_{j+1/2}
-\epsilon\widehat{\phi}_{h,x} (\psi_h)_{x}^-\Big|_{j+1/2}
+\epsilon\widehat{\phi}_{h,xx} (\psi_h)^-\Big|_{j+1/2}\nonumber\\
&\;+\epsilon\widehat{\phi}_h (\psi_h)_{xx}^+\Big|_{j-1/2}
-\epsilon\widehat{\phi}_{h,x} (\psi_h)_{x}^+\Big|_{j-1/2}
+\epsilon\widehat{\phi}_{h,xx} (\psi_h)^+\Big|_{j-1/2}\;\nonumber\\
\end{align}
and the numerical fluxes are chosen as follows, c.f. \cite{FuShu18},
\begin{align}
\label{flux1}
\widehat{u}_h = \mean{u_h} +\frac12\jump{\phi_h}, &\quad 
\widehat{\phi}_h = \mean{\phi_h} +\frac12\jump{u_h},\\
\label{flux2}
\widehat{u}_{h,x} = \mean{(u_h)_x} +\frac12\jump{(\phi_h)_x}, &\quad 
\widehat{\phi}_{h,x} = \mean{(\phi_h)_x} +\frac12\jump{(u_h)_x},\\
\label{flux3}
\widehat{u}_{h,xx} = \mean{(u_h)_{xx}} +\frac12\jump{(\phi_h)_{xx}}, &\quad 
\widehat{\phi}_{h,xx} = \mean{(\phi_h)_{xx}} +\frac12\jump{(u_h)_{xx}}.
\end{align}
\end{subequations}

The energy conservation property of this scheme is presented in the following theorem.
\begin{theorem}\label{thm:adv1d}
The  energy 
\[
 E_h(t) = \int_I((u_h)^2+(\phi_h^2))\mathrm{dx}
\]
is conserved by the semi-discrete scheme \eqref{scheme:adv1d}
for all time.
\end{theorem}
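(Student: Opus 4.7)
The natural strategy is to test \eqref{scheme:adv1d-1} with $v_h=u_h$ and \eqref{scheme:adv1d-2} with $\psi_h=\phi_h$, sum over $j=1,\ldots,N$, and add, so that
\[
\tfrac{1}{2}\tfrac{d}{dt}E_h(t) \;=\; -\sum_j D_{u,j}((u_h,\phi_h),u_h) \;+\; \sum_j D_{\phi,j}((u_h,\phi_h),\phi_h).
\]
Conservation of $E_h$ thus reduces to the algebraic identity $\sum_j D_{u,j}((u_h,\phi_h),u_h) = \sum_j D_{\phi,j}((u_h,\phi_h),\phi_h)$.

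To establish this, I would integrate by parts three times in each volume integral $-\epsilon\int_{I_j}u_h(v_h)_{xxx}$, transferring all three derivatives onto $u_h$. Combining the resulting cellwise boundary contributions with the numerical-flux terms in \eqref{op-uxxx}--\eqref{op-pxxx}, the form $D_{u,j}((u_h,\phi_h),u_h)$ decomposes as $\epsilon\int_{I_j}u_h(u_h)_{xxx}$ plus boundary terms proportional to the flux deviations $\widehat{u}_h - u_h^{\pm}$, $\widehat{u}_{h,x} - (u_h)_x^{\pm}$, and $\widehat{u}_{h,xx} - (u_h)_{xx}^{\pm}$, and analogously for $D_{\phi,j}$. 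The purpose of doubling the unknowns is revealed here: a direct check using \eqref{flux1}--\eqref{flux3} gives
\[
\widehat{u}_h - u_h^- \;=\; \tfrac{1}{2}\jump{u_h+\phi_h}, \qquad \widehat{u}_h - u_h^+ \;=\; -\tfrac{1}{2}\jump{u_h-\phi_h},
\]
\[
\widehat{\phi}_h - \phi_h^- \;=\; \tfrac{1}{2}\jump{u_h+\phi_h}, \qquad \widehat{\phi}_h - \phi_h^+ \;=\; +\tfrac{1}{2}\jump{u_h-\phi_h},
\]
together with the same relations at the first- and second-derivative levels. This singles out the auxiliary fields $w_h = u_h+\phi_h$ and $z_h = u_h-\phi_h$.

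After re-indexing the left-endpoint contributions (using periodicity to pair them with right-endpoint contributions from the neighboring cell at each interface), the strong-form volume remainder satisfies
\[
\sum_j \int_{I_j}\bigl(u_h(u_h)_{xxx} - \phi_h(\phi_h)_{xxx}\bigr) \;=\; \tfrac{1}{2}\sum_j\int_{I_j}\bigl(w_h(z_h)_{xxx} + z_h(w_h)_{xxx}\bigr),
\]
which integrates by parts cellwise to jump terms of the form $[w_h(z_h)_{xx} - (w_h)_x(z_h)_x + (w_h)_{xx} z_h]_{\partial I_j}$. Consolidating all boundary data at a common interface $x_{j+1/2}$, the difference $\sum_j(D_{u,j}-D_{\phi,j})$ collapses to three pointwise expressions at each edge, one per derivative order $0,1,2$; each vanishes by the elementary jump identity $[ab] = [a]\,b^- + [b]\,a^+$ applied with $a,b$ drawn from $\{w_h,z_h\}$ and their first and second derivatives.

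The main obstacle is not any deep analytic step but the careful sign- and index-bookkeeping. One must distinguish the $\pm$ traces at a fixed interface $x_{j+1/2}$ from the left/right endpoints of a cell $I_j$ (whose right endpoint is the "$-$" trace at $x_{j+1/2}$ while its left endpoint is the "$+$" trace at $x_{j-1/2}$), and every left-endpoint contribution must be shifted via $j\mapsto j+1$ to land at the common interface before the algebraic cancellation can be executed.
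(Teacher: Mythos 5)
Your proposal is correct and follows essentially the same route as the paper: test with $(u_h,\phi_h)$, sum over cells, integrate by parts, and let the skew coupling of the fluxes \eqref{flux1}--\eqref{flux3} produce the interface cancellation (the paper organizes this by showing both $\sum_j D_{u,j}$ and $\sum_j D_{\phi,j}$ equal a common quantity $\Theta$ built from products of jumps, whereas you show their difference telescopes to zero after passing to $w_h=u_h+\phi_h$, $z_h=u_h-\phi_h$; these are the same computation in different bookkeeping). Your flux-deviation identities check out, so the sketch is sound.
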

\begin{remark}[Modified energy]
We specifically remark here that it is the total energy 
\[
  E_h(t) = \int_I((u_h)^2+(\phi_h^2))\mathrm{dx}
\]
that is conserved, not the quantity $\int_I (u_h)^2 \mathrm{dx}$.
The quantity $\phi_h$ is an approximation to the {\it zero} function.
\end{remark}

\begin{proof}
 By repeatedly integrating by parts and using the periodic boundary condition, we have 
 \begin{align*}
 & \sum_{j=1}^N \left( -\intc{u_h\,(u_h)_{xxx}} \right) \\
 & = \sum_{j=1}^N(\mean{u_h}\jump{(u_h)_{xx}}-\mean{(u_h)_{x}}\jump{(u_h)_x}+
  \mean{(u_h)_{xx}}\jump{u_h})|_{j-1/2}.\nonumber
 \end{align*}
Taking $v_h=u_h$ and $\psi_h=\phi_h$ in the scheme \eqref{scheme:adv1d}, and summing 
over all elements, we get 
\begin{align*}
 \frac12\int_I{(u_h)^2}\mathrm{dx}
 +\Theta = 0,\\
 \frac12\int_I{(\phi_h)^2}\mathrm{dx}
 -\Theta = 0,
\end{align*}
where 
\[
 \Theta = 
\frac12\sum_{j=1}^N(-\jump{\phi_h}\jump{(u_h)_{xx}}+\jump{(\phi_h)_{x}}\jump{(u_h)_x}-
  \jump{(\phi_h)_{xx}}\jump{u_h})|_{j-1/2}.
\]
We get the desired result by adding the above equations, and integrating over $t$.
\end{proof}

Now, we turn to the error estimates of the scheme \eqref{scheme:adv1d}.
We start by introducing a set of projections, similar to the ones used in \cite{ChengShu08}.
We shall use the following left and right generalized Gauss-Radau projections $P_h^{\pm}\in P^k(I_j)$ for $k\ge2$.
\begin{subequations}\label{proj1d}
\begin{align}
\label{pj1}
\intc{P_h^\pm u(x) v_h}  &= \intc{u(x) v_h} && \hspace{-2cm} \forall v_h \in
P^{k-3}(I_j), \\
 (P_h^\pm u)^\pm  &= u^\pm &&\hspace{-2cm} \text{at}~~ x_{j\mp\frac12},\\
 (P_h^\pm u)_x^\pm  &= u_x^\pm &&\hspace{-2cm} \text{at}~~ x_{j\mp\frac12},\\
 (P_h^\pm u)_{xx}^\pm  &= u_{xx}^\pm &&\hspace{-2cm} \text{at}~~ x_{j\mp\frac12},
\end{align}
where the first set of equations \eqref{pj1} vanishes if $k=2$.
The following approximation properties of $P_h^{\pm}$ is well-known, c.f. \cite{ChengShu08},
\begin{align}
\|P_h^\pm u - u\|_{I_j}\le Ch^{k+1}.
\end{align}
\end{subequations}

We shall also use the following coupled projection specifically designed for the DG scheme \eqref{scheme:adv1d}.
For any function $u, \phi\in H^2(I)$, 
we introduce the following coupled auxiliary projection 
 $(P_h^{1,\star} u, P_h^{2,\star} \phi)\in [V_h^k]^2$:
 \begin{subequations}
 \label{proj-adv}
 \begin{align}
 \label{proj-adv1}
  \intc{P_h^{1,\star} u(x) v_h}  &= \intc{u(x) v_h} &&  \forall v_h \in P^{k-3}(I_j),\\
 \label{proj-adv2}
  \intc{P_h^{2,\star} \phi(x) v_h}  &= \intc{\phi(x) v_h}  &&  \forall v_h \in P^{k-3}(I_j),\\
 \label{proj-adv3}
  (\mean {P_h^{1,\star} u_h} +{\frac12}\jump {P_h^{2,\star} \phi_h})\Big|_{j-\frac12} &=  u(x_{j-\frac12}),\\
 \label{proj-adv4}
 ( \mean {P_h^{2,\star} \phi_h} +{\frac12}\jump {P_h^{1,\star} u_h})\Big|_{j-\frac12} &=  \phi(x_{j-\frac12}),\\
 \label{proj-adv5}
  (\mean {(P_h^{1,\star} u_h)_x} +{\frac12}\jump {(P_h^{2,\star} \phi_h)_x})\Big|_{j-\frac12} &=  u_{x}(x_{j-\frac12}),\\
 \label{proj-adv6}
 ( \mean {(P_h^{2,\star} \phi_h)_{x}} +{\frac12}\jump {(P_h^{1,\star} u_h})_x)\Big|_{j-\frac12} &=  \phi_{x}(x_{j-\frac12}),\\
 \label{proj-adv7}
  (\mean {(P_h^{1,\star} u_h)_{xx}} +{\frac12}\jump {(P_h^{2,\star} \phi_h)_{xx}})\Big|_{j-\frac12} &=  u_{xx}(x_{j-\frac12}),\\
 \label{proj-adv8}
 ( \mean {(P_h^{2,\star} \phi_h)_{xx}} +{\frac12}\jump {(P_h^{1,\star} u_h})_{xx})\Big|_{j-\frac12} &=  \phi_{xx}(x_{j-\frac12}),
 \end{align}
 \end{subequations}
 for all $j$.
 
 At a first glance, the projection \eqref{proj-adv} seems to be globally coupled. 
 The following Lemma shows that it is actually an optimal local projection.
 \begin{lemma}
 \label{lemma:proj} Let $k\ge 2$. 
  The projection \eqref{proj-adv} is well-defined, and it satisfies
  \begin{subequations}
  \label{transformX}
  \begin{align}
   P_h^{1,\star} u = \frac12(P_h^+(u+\phi)+P_h^-(u-\phi)),\\
   P_h^{2,\star} \phi = \frac12(P_h^+(u+\phi)-P_h^-(u-\phi)).
  \end{align}
  In particular, it satisfies
  \begin{align}
  \label{approx-proj}
  \| P_h^{1,\star} u -u\|_{I_j}\le Ch^{k+1}, \text{ and }
  \| P_h^{2,\star} \phi -\phi\|_{I_j}\le Ch^{k+1}.
  \end{align}
    \end{subequations}
 \end{lemma}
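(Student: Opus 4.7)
The plan is to \emph{decouple} the coupled projection \eqref{proj-adv} by a change of variables that mirrors the flux design \eqref{flux1}--\eqref{flux3}. Introduce
\[
W_h := P_h^{1,\star}u + P_h^{2,\star}\phi,\qquad Z_h := P_h^{1,\star}u - P_h^{2,\star}\phi,
\]
so that the target identities in \eqref{transformX} read $W_h = P_h^+(u+\phi)$ and $Z_h = P_h^-(u-\phi)$.

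First I would rewrite the eight conditions \eqref{proj-adv1}--\eqref{proj-adv8} in terms of $W_h$ and $Z_h$, using the elementary identities $\mean{a} + \tfrac12\jump{a} = a^+$ and $\mean{a} - \tfrac12\jump{a} = a^-$. Adding \eqref{proj-adv3} and \eqref{proj-adv4} gives $\mean{W_h} + \tfrac12\jump{W_h} = (u+\phi)(x_{j-1/2})$, i.e.\ $W_h^+(x_{j-1/2}) = (u+\phi)(x_{j-1/2})$; subtracting gives $Z_h^-(x_{j-1/2}) = (u-\phi)(x_{j-1/2})$. The analogous combinations of \eqref{proj-adv5}--\eqref{proj-adv8} yield, at every interface, the single-sided Gauss--Radau values of $W_h$ (from the right) and $Z_h$ (from the left), for both the first and second derivatives. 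Meanwhile adding and subtracting the interior conditions \eqref{proj-adv1},\eqref{proj-adv2} gives
\[
\intc{W_h v_h} = \intc{(u+\phi) v_h},\qquad \intc{Z_h v_h} = \intc{(u-\phi) v_h}\qquad \forall v_h\in P^{k-3}(I_j).
\]

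Next I would observe that the conditions on $W_h|_{I_j}$ are precisely $k-2$ interior moments plus $3$ point-values at the left endpoint $x_{j-1/2}$ (the value, first, and second derivatives, all taken from inside $I_j$), giving $k+1$ conditions on the $k+1$ degrees of freedom. By the periodic reindexing $j\to j+1$, the conditions on $Z_h|_{I_j}$ collect at the right endpoint $x_{j+1/2}$, again with the right count. These are exactly the defining relations of $P_h^+$ and $P_h^-$ applied cell-by-cell, so $W_h|_{I_j} = P_h^+(u+\phi)|_{I_j}$ and $Z_h|_{I_j} = P_h^-(u-\phi)|_{I_j}$. In particular the coupled system is well-posed and reduces to local problems. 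Inverting the change of variables then yields \eqref{transformX}.

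Finally, the approximation bound \eqref{approx-proj} follows by the triangle inequality: $P_h^{1,\star}u - u = \tfrac12(P_h^+(u+\phi) - (u+\phi)) + \tfrac12(P_h^-(u-\phi) - (u-\phi))$, and each term is $O(h^{k+1})$ in the $L^2(I_j)$ norm by the standard estimate for the generalized Gauss--Radau projections recalled after \eqref{proj1d}; the estimate for $P_h^{2,\star}\phi - \phi$ is identical. The main obstacle here is really only the bookkeeping in step one — making sure the signs, jumps, and means align so that the eight coupled conditions collapse cleanly into two decoupled Gauss--Radau projections. Once the change of variables is found, the rest is essentially routine.
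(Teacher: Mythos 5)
Your proposal is correct and is essentially the argument the paper intends: the paper omits the proof, deferring to \cite[Lemma 2.6]{FuShu18}, and your sum/difference change of variables $W_h = P_h^{1,\star}u + P_h^{2,\star}\phi$, $Z_h = P_h^{1,\star}u - P_h^{2,\star}\phi$ combined with $\mean{a}\pm\tfrac12\jump{a}=a^{\pm}$ is exactly the mechanism that collapses the eight coupled conditions into the two local Gauss--Radau projections $P_h^{+}(u+\phi)$ and $P_h^{-}(u-\phi)$. The degree-of-freedom count ($k-2$ moments plus three one-sided endpoint conditions per cell, with the $Z_h$ conditions reindexed to $x_{j+1/2}$) and the concluding triangle-inequality step are both correct, so the write-up supplies precisely the details the paper leaves out.
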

 \begin{proof}
 The proof is identical to \cite[Lemma 2.6]{FuShu18}, and is omitted for simplicity.
 We specifically mention that the {\it local} structure of the projection is due to the careful choice of the 
 boundary terms (numerical fluxes) in \eqref{proj-adv}.
 \end{proof}

Now, we are ready to state our main result on the error estimate.
\begin{theorem}\label{thm:adv1d:err}
Assume that the exact solution $u$ of \eqref{advection1d} is sufficiently smooth. 
Let $u_h$ be the numerical solution of the semi-discrete DG scheme \eqref{scheme:adv1d} for $k\ge 2$. 
Then for $T >0$ there holds the following
error estimate
\begin{equation} \label{thm:adv-est}
\normI {u(T) - u_h(T)} 
+\normI {\phi_h(T)}
\le C (1+T) \ho,
\end{equation}
where $C$ is independent of $h$ and $T$.
\end{theorem}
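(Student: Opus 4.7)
The plan is the standard Galerkin-orthogonality plus special-projection argument, pushing the cancellations through the bilinear forms $D_{u,j}, D_{\phi,j}$ so that the energy identity from Theorem~\ref{thm:adv1d} delivers a linear-in-$T$ bound. Since $\phi\equiv 0$ is the exact auxiliary solution, I would split
\[
u-u_h=\eta_u+\xi_u,\qquad \phi-\phi_h=\eta_\phi+\xi_\phi,
\]
with $\eta_u=u-P_h^{1,\star}u,\;\xi_u=P_h^{1,\star}u-u_h,\;\eta_\phi=\phi-P_h^{2,\star}\phi,\;\xi_\phi=P_h^{2,\star}\phi-\phi_h$. By Lemma~\ref{lemma:proj} we have $\|\eta_u\|_{I_j}+\|\eta_\phi\|_{I_j}\le C\ho$ (using $\phi=0$ in the second estimate), and the same bound for the time derivatives, since the projections commute with $\partial_t$.

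Next I would write the consistency (Galerkin-orthogonality) identity. The exact pair $(u,0)$ satisfies \eqref{scheme:adv1d} because the numerical fluxes \eqref{flux1}--\eqref{flux3} reduce to the traces $u,u_x,u_{xx},0,0,0$ on smooth exact solutions. Subtracting the scheme from its consistent counterpart and plugging in the decomposition gives the error equations
\begin{align*}
\intc{(\xi_u)_t v_h}+D_{u,j}((\xi_u,\xi_\phi),v_h)
&=-\intc{(\eta_u)_t v_h}-D_{u,j}((\eta_u,\eta_\phi),v_h),\\
\intc{(\xi_\phi)_t\psi_h}-D_{\phi,j}((\xi_u,\xi_\phi),\psi_h)
&=-\intc{(\eta_\phi)_t\psi_h}+D_{\phi,j}((\eta_u,\eta_\phi),\psi_h).
\end{align*}
The crux of the argument is that each of $D_{u,j}((\eta_u,\eta_\phi),v_h)$ and $D_{\phi,j}((\eta_u,\eta_\phi),\psi_h)$ vanishes. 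The volume piece $-\epsilon\intc{\eta_u (v_h)_{xxx}}$ disappears because $(v_h)_{xxx}\in P^{k-3}(I_j)$ and \eqref{proj-adv1}--\eqref{proj-adv2} enforce $L^2$-orthogonality of $\eta_u,\eta_\phi$ to $P^{k-3}(I_j)$ (trivially when $k=2$). The six boundary terms disappear because the coupled-flux conditions \eqref{proj-adv3}--\eqref{proj-adv8} are exactly the statement that
\[
\mean{\eta_u}+\tfrac12\jump{\eta_\phi}=0,\quad
\mean{\eta_\phi}+\tfrac12\jump{\eta_u}=0
\]
at every interface, together with the analogous identities for first and second derivatives. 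Thus the error equations reduce to a clean DG problem for $(\xi_u,\xi_\phi)$ forced only by the projected time derivative.

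Taking $v_h=\xi_u$, $\psi_h=\xi_\phi$, summing over $j$, and adding the two equations lets the bilinear-form contributions cancel exactly as in the proof of Theorem~\ref{thm:adv1d}; this yields the clean identity
\[
\frac{1}{2}\frac{d}{dt}\!\left(\|\xi_u\|_I^2+\|\xi_\phi\|_I^2\right)
=-\int_I(\eta_u)_t\,\xi_u\,\mathrm{dx}-\int_I(\eta_\phi)_t\,\xi_\phi\,\mathrm{dx}.
\]
Applying Cauchy--Schwarz in the form $\tfrac{d}{dt}E_\xi\le(\|(\eta_u)_t\|_I^2+\|(\eta_\phi)_t\|_I^2)^{1/2}$ with $E_\xi=(\|\xi_u\|_I^2+\|\xi_\phi\|_I^2)^{1/2}$ and integrating in time (crucially, avoiding Gronwall, which would give exponential growth) gives $E_\xi(T)\le E_\xi(0)+CT\ho$. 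Choosing $u_h(0)=P_h^{1,\star}u_0$, $\phi_h(0)=P_h^{2,\star}0$ makes $E_\xi(0)=0$; a generic optimal-order initial projection produces the same $\ho$ order. A final triangle inequality $\|u(T)-u_h(T)\|_I+\|\phi_h(T)\|_I\le \|\eta_u(T)\|_I+\|\eta_\phi(T)\|_I+E_\xi(T)\le C(1+T)\ho$ closes the argument.

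The main obstacle is verifying the full cancellation of $D_{u,j}((\eta_u,\eta_\phi),v_h)$ and $D_{\phi,j}((\eta_u,\eta_\phi),\psi_h)$: this is where the somewhat unusual, globally-looking coupled projection \eqref{proj-adv} pays off, and where the restriction $k\ge 2$ is used (so that $(v_h)_{xxx}$ lies in the space $P^{k-3}$ against which $\eta_u,\eta_\phi$ are orthogonal). Everything else is bookkeeping on top of the energy identity already established in Theorem~\ref{thm:adv1d}.
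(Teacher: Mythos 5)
Your proposal is correct and is precisely the standard energy argument that the paper invokes (but does not write out): the same error decomposition via the coupled projection \eqref{proj-adv}, the same cancellation of the projection-error terms in $D_{u,j}$ and $D_{\phi,j}$ through \eqref{proj-adv1}--\eqref{proj-adv8}, the same energy identity for $(\xi_u,\xi_\phi)$ as the one displayed in the paper's proof sketch, and the same Cauchy--Schwarz-without-Gronwall step to obtain the linear-in-$T$ constant. No gaps; your write-up is in fact more complete than the paper's.
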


\newcommand{\euu}{\varepsilon_u}
\newcommand{\duu}{\delta_u}
\newcommand{\epp}{\varepsilon_\phi}
\newcommand{\dpp}{\delta_\phi}
\newcommand{\puu}{P_h^{1,\star}}
\newcommand{\ppp}{P_h^{2,\star}}
\begin{proof}
The proof is a standard energy argument, again, we skip the details.
In particular, we get the following energy identity by the specific choice of the projection \eqref{proj-adv}:
\begin{align*}
&\frac12\intid{(u_h-P_h^{1,\star}u_h)^2+
(\phi_h-P_h^{1,\star}\phi_h)^2}\\
&=
\intid{(u-P_h^{1,\star}u_h)_t(u_h-P_h^{1,\star}u_h)+
(-P_h^{1,\star}\phi_h)_t(\phi_h-P_h^{1,\star}\phi_h)}.
\end{align*}
\end{proof}
\begin{remark}[$\phi_h$ approximates {\it zero}]
Note that $\phi_h$ is an order $k+1$ approximation to the {\it zero} function.
\end{remark}


\begin{remark}[Other numerical fluxes]
\label{rk:flux}
There are two other choices of numerical fluxes available in the literature\cite{ChengShu08, Bona13}, with both methods devised for the original
equation \eqref{advection1d}.
The flux in \cite{ChengShu08} is given by 
\begin{align}
 \label{flux-cheng}
\widehat{u}_h = {(u_h)^-},\;\;
\widehat{u}_{h,x} = (u_h)_x^+, \;\;
\widehat{u}_{h,xx} = (u_h)_{xx}^+.
\end{align}
The resulting method is optimally convergent, but not energy conserving.
The flux in \cite{Bona13} is given by 
\begin{align}
 \label{flux-xing}
\widehat{u}_h = {(u_h)^-},\;\;
\widehat{u}_{h,x} = \mean{(u_h)_x}, \;\;
\widehat{u}_{h,xx} = (u_h)_{xx}^+.
\end{align}
The resulting method is energy conserving but suboptimal on general non-uniform mesh.
\end{remark}

\subsection{Case $f\not=0$}
Now, we consider the generalized KdV equation \eqref{eq:kdv}.
Again, we add a {\it zero} auxiliary function $\phi(x,t)$ and 
consider the following $2\times 2$ system
 \begin{subequations}
 \label{aux-adv2}
\begin{align}
 u_t +f(u)_x+\epsilon  u_{xxx} & =0, &&\hspace{-2.8cm}  (x,t)\in I\times (0,T] ,\\
 \phi_t -\epsilon  \phi_{xxx} & =0, &&\hspace{-2.8cm}  (x,t)\in I\times (0,T].
\end{align}
\end{subequations}
To properly treat the nonlinear convection term, we shall first recall 
the {\it unique} square entropy-conserving flux, c.f. \cite{Tadmor03,ChenShu17},
for the scalar conservation law 
\[
 u_t + f(u)_x = 0,
\]
which reads:
\begin{align}
\label{eflux}
{f}_S(u_L, u_R) = \left\{
\begin{tabular}{l l}
 $\frac{\Psi(u_R)-\Psi(u_L)}{u_R-u_L}$& if $u_L\not=u_R$\\[1.5ex]
 $f(u_L)$& if $u_L=u_R$,
\end{tabular}
\right.
\end{align}
where $\Psi(u): = uf(u)-\int^u f(u)$ is the so-called {\it potential flux function} for the square entropy $U(u) = u^2/2$.
Note that the above choice of the nonlinear flux was used in \cite{Bona13}.

We denote the following operator
\begin{align}
\label{conv-linear}
 F_{u,j}((u_h, \phi_h), v_h) = &\;
 \intc{f(u_h)\,(v_h)_x} - \widehat{f}_{u,h} (v_h)^-|_{j+1/2}
+\widehat{f}_{u,h} (v_h)^+|_{j-1/2},
\end{align}
with the numerical flux  $\widehat{f}_{u,h} = f_S(u_h^-, u_h^+)$ being the square entropy conserving flux
given in \eqref{eflux}.
Then, the semi-discrete scheme for equations \eqref{aux-adv2} reads:
Find, for any time $t \in (0, T]$,
the unique function $(u_h, \phi_h) = (u_h(t), \phi_h(t))
\in V_h^k\times V_h^k$ such that
 \begin{subequations}
 \label{scheme:adv2}
\begin{align} 
\intc{(u_h)_tv_h}+F_{u,j}((u_h, \phi_h), v_h)  +D_{u,j}((u_h, \phi_h), v_h) =&\; 0,\\
\intc{(\phi_h)_t\psi_h} 
-D_{\phi,j}((u_h, \phi_h), \psi_h)=&\;0,
\end{align}
holds for all $(v_h, \psi_h)\in V_h^k\times V_h^k$ and all $\jj$.
\end{subequations}

It is easy to show that this method is energy-conserving in the sense of Theorem \ref{thm:adv1d}. 
An error analysis using the same energy argument as in Theorem \ref{thm:adv1d:err} leads to a suboptimal convergence rate of order $h^k$.
However, numerical results presented in the next section suggest that the method is optimally convergent.

\begin{remark}[Convective flux]
When $f(u) = u$, it is clear that the flux \eqref{eflux} is nothing but the central flux 
${f}_S(u_L, u_R) = \frac12(u_L+u_R)$. The DG method for linear scalar conservation law using a central flux 
is suboptimal in general, and only optimal for even polynomial degrees on uniform meshes, see \cite{FuShu18}.
Numerically results for the DG method using the flux \eqref{eflux}, not shown in this paper, 
for the Burgers equation ($u_t + uu_x =0$) also
indicates a similar convergence behavior, namely, suboptimal order $k$ for 
odd polynomial degrees, and optimal order $k+1$ for even polynomial degrees on uniform meshes.

However, the method \eqref{scheme:adv2} is numerically shown to be optimally convergent on general nonuniform meshes,
probably due to the interplay between the convection term and dispersion term. 
\end{remark}

\begin{remark}[Alternative auxiliary function]
We also tried to add the nonlinear convection term into the auxiliary function 
\[
 \phi_t-f(\phi)_x - \epsilon \phi_{xxx}=0,
\]
and discretize the resulting convective terms with the following operators
\begin{align*}
 {F}_{u,j}((u_h, \phi_h), v_h) = &\;
 \intc{f(u_h)\,(v_h)_x} - \tilde{f}_{u,h} (v_h)^-|_{j+1/2}
+\tilde{f}_{u,h} (v_h)^+|_{j-1/2},
\\
{F}_{\phi,j}((u_h, \phi_h), \psi_h) = &\;
 \intc{f(\phi_h)\,(\psi_h)_x} - \tilde{f}_{\phi,h} (\psi_h)^-|_{j+1/2}
+\tilde{f}_{\phi,h} (\psi_h)^+|_{j-1/2},
\end{align*}
with the numerical fluxes given by 
\begin{align*}
\tilde{f}_{u,h}|_{j-1/2} =&\; 
f_S(u_h^-, u_h^+) + \alpha_{j-1/2}\jump{\phi_h},
\\
\tilde{f}_{\phi,h}|_{j-1/2} =&\; 
f_S(\phi_h^-, \phi_h^+)+\alpha_{j-1/2}\jump{u_h},
\end{align*}
where 
\[
 \alpha_{j-1/2} = \max_{\min\{u_h^\pm, \phi_h^\pm\}\le v \le 
 \max\{u_h^\pm, \phi_h^\pm\}}\mathrm{sign}(\mean{u_h})|f'(v)|.
\]
This choice of fluxes mimic the linear scalar case in \cite{FuShu18}, and is energy-conserving.
While the resulting method is also numerically shown to be optimally convergent, 
we do observe a larger phase error of the method for long time simulations comparing with the method 
\eqref{scheme:adv2}. For this reason, we will stick with 
the formulation in \eqref{scheme:adv2}.
\end{remark}

\subsection{Time discretization}
In this paper, we will simply use the SSP-RK3 \cite{ShuOsher88} time discretization.
For the method of lines ODE 
\[
 (u_h)_t = L(u_h),
\]
the SSP-RK3 method is given by 
\begin{subequations}
\label{rk3}
\begin{align}
 u_h^{(1)} =& u_h^n + \Delta t L(u_h^n),\\
 u_h^{(2)} =& \frac34u_h^n +\frac14u_h^{(1)}+\frac14 \Delta t L(u_h^{(1)}),\\ 
 u_h^{n+1} =& \frac13u_h^n +\frac23 u_h^{(2)}+\frac23\Delta t L(u_h^{(2)}). 
 \end{align}
 \end{subequations}

This time discretization is explicit and dissipative.
For PDEs containing high order spatial derivatives as \eqref{eq:kdv}, 
explicit time discretization is subject to severe time step restriction for stability. 
We may also use the conservative, implicit Runge-Kutta collocation type methods 
discussed in \cite[Section 4]{Bona13}.
We do not pursue this issue since the focus of this paper is on the DG spatial discretization.

\section{Numerical results}
\label{sec:num}
We present numerical results to 
assess the performance of the proposed energy-conserving DG method.
We also compare results with the (dissipative) DG method of Cheng and Shu \cite{ChengShu08} denoted as (U),
and the (conservative) DG method of Bona et al. \cite{Bona13}, denoted as (C).
We denote the new method as (A). The RK3 time discretization \eqref{rk3} is used, with the time step size 
$\Delta t = CFL\, h^3$, for a sufficiently small CFL number to ensure stability.
All numerical simulation are performed using MATLAB.

\subsection*{Example 4.1: with no convection}
We consider the following equation
\begin{align}
\label{eq1}
 u_t + \frac{1}{4\pi^2}u_{xxx} = 0
\end{align}
on a unit interval $I=[0,1]$ with 
initial condition $u(x,0) = \sin(2\pi x)$, and a periodic boundary condition.
The exact solution is 
\begin{align*}
u(x, t) = \sin(2\pi (x+t)).
\end{align*}

Table \ref{table:adv1d} lists the numerical errors and their orders for 
the three DG methods at $T=10 \Delta t_0$, where $\Delta t_0$ is the time step size on the coarsest mesh.
We use $P^k$ polynomials with $2\le k\le 4$ on a nonuniform mesh which is a $10\%$ random perturbation of the 
uniform mesh.

From the table we conclude that, 
one can always observe optimal $(k+1)$th order of accuracy for both the variable $u_h$ (which approximate the solution $u$)
and $\phi_h$ (which approximate the {\it zero} function)
for the new energy-conserving DG method \eqref{scheme:adv1d}. This validates 
our convergence result in Theorem \ref{thm:adv1d:err}.
Moreover, the absolute value of the error is slightly smaller than
the optimal-convergent DG method (U) for all polynomial degrees.
We also observe suboptimal convergence for the method (C) for all 
polynomial degrees.
We specifically point out that while 
optimal convergence for the  method (C)
can be shown for even polynomial degrees on uniform meshes \cite{Bona13},
Table \ref{table:adv1d}  shows that such optimality no longer holds on nonuniform meshes, regardless of the polynomial degree.

\begin{table}[htbp]
\caption{\label{table:adv1d} 
The $L^2$-errors
and orders for Example 4.1 for the three DG methods
on a random mesh of $N$ cells. 
} \centering
\bigskip
\begin{tabular}{|c|c|cc|cc|cc|cc|}
\hline
 & &\multicolumn{2}{|c|}{(U)}&\multicolumn{2}{c|}{(C)}&\multicolumn{4}{c|}{(A)}\\
 \hline
 &  {$N$}      & $\|u-u_h\|$ & Order & $\|u-u_h\|$ & Order &  $\|u-u_h\|$ & Order 
 &  $\|\phi_h\|$  & Order\\
\hline
\multirow{4}{*}{$P^2$}
 & 10 & 6.57e-03 & 0.00 & 6.67e-03 & 0.00 & 8.73e-04 & 0.00 & 3.79e-03 & 0.00  \\
 & 20 & 2.21e-03 & 1.57 & 1.52e-03 & 2.13 & 1.77e-03 & -1.02 & 8.70e-04 & 2.12 \\ 
 & 40 & 2.90e-04 & 2.93 & 2.00e-04 & 2.93 & 1.63e-04 & 3.44 & 1.10e-04 & 2.99 \\ 
 & 80 & 3.61e-05 & 3.01 & 3.23e-05 & 2.63 & 1.93e-05 & 3.08 & 1.28e-05 & 3.10 \\ 
\hline
\multirow{4}{*}{$P^3$}
 & 10 & 1.94e-04 & 0.00 & 2.22e-04 & 0.00 & 7.99e-05 & 0.00 & 6.31e-05 & 0.00  \\
 & 20 & 2.04e-05 & 3.25 & 6.56e-05 & 1.76 & 2.19e-05 & 1.87 & 1.72e-05 & 1.88 \\ 
 & 40 & 1.29e-06 & 3.99 & 2.45e-05 & 1.42 & 1.06e-06 & 4.37 & 9.63e-07 & 4.16 \\ 
 & 80 & 7.99e-08 & 4.01 & 2.72e-06 & 3.17 & 5.93e-08 & 4.16 & 5.47e-08 & 4.14 \\ 
\hline
\multirow{4}{*}{$P^4$}
 & 10 & 4.65e-06 & 0.00 & 4.71e-06 & 0.00 & 1.42e-06 & 0.00 & 2.25e-06 & 0.00  \\
 & 20 & 2.13e-07 & 4.45 & 1.44e-07 & 5.03 & 1.67e-07 & 3.08 & 1.66e-07 & 3.76 \\ 
 & 40 & 5.88e-09 & 5.18 & 5.78e-09 & 4.64 & 4.17e-09 & 5.33 & 4.76e-09 & 5.12 \\ 
 & 80 & 1.82e-10 & 5.01 & 3.47e-10 & 4.06 & 1.46e-10 & 4.84 & 1.56e-10 & 4.93 \\ 
\hline
\end{tabular}
\end{table}

\subsection*{Example 4.2: with linear convection}
We consider the following equation
\begin{align}
\label{eq2}
 u_t - u_x + \frac{1}{4\pi^2}u_{xxx} = 0
\end{align}
on a unit interval $I=[0,1]$ with 
initial condition $u(x,0) = \sin(2\pi x)$, and a periodic boundary condition.
The exact solution is 
\begin{align*}
u(x, t) = \sin(2\pi (x+2t)).
\end{align*}

Table \ref{table:adv1d-D} lists the numerical errors and their orders for 
the three DG methods at $T=10 \Delta t_0$, where $\Delta t_0$ is the time step size on the coarsest mesh.
We use $P^k$ polynomials with $2\le k\le 4$ on a nonuniform mesh which is a $10\%$ random perturbation of the 
uniform mesh.

%
The convergence behavior of all three methods are similar to Example 4.1.
However, we do not have a proof of optimality of the methods (U) and (A).

\begin{table}[htbp]
\caption{\label{table:adv1d-D} 
The $L^2$-errors
and orders for Example 4.2 for the three DG methods on a random mesh of $N$ cells. 
} \centering
\bigskip
\begin{tabular}{|c|c|cc|cc|cc|cc|}
\hline
 & &\multicolumn{2}{|c|}{(U)}&\multicolumn{2}{c|}{(C)}&\multicolumn{4}{c|}{(A)}\\
 \hline
 &  {$N$}      & $\|u-u_h\|$ & Order & $\|u-u_h\|$ & Order &  $\|u-u_h\|$ & Order 
 &  $\|\phi_h\|$  & Order\\
\hline
\multirow{4}{*}{$P^2$}
 & 10 & 3.57e-03 & 0.00 & 4.59e-03 & 0.00 & 1.27e-03 & 0.00 & 3.10e-03 & 0.00  \\
 & 20 & 1.93e-03 & 0.88 & 1.13e-03 & 2.03 & 1.21e-03 & 0.07 & 1.08e-03 & 1.52 \\ 
 & 40 & 2.89e-04 & 2.74 & 1.82e-04 & 2.63 & 1.63e-04 & 2.90 & 1.15e-04 & 3.22 \\ 
 & 80 & 3.61e-05 & 3.00 & 2.67e-05 & 2.77 & 1.94e-05 & 3.07 & 1.30e-05 & 3.16 \\ 
\hline
\multirow{4}{*}{$P^3$}
 & 10 & 1.93e-04 & 0.00 & 2.22e-04 & 0.00 & 7.97e-05 & 0.00 & 6.30e-05 & 0.00  \\
 & 20 & 2.05e-05 & 3.24 & 6.66e-05 & 1.74 & 2.18e-05 & 1.87 & 1.72e-05 & 1.87 \\ 
 & 40 & 1.29e-06 & 3.99 & 2.45e-05 & 1.44 & 1.07e-06 & 4.35 & 9.65e-07 & 4.16 \\ 
 & 80 & 7.99e-08 & 4.01 & 2.71e-06 & 3.18 & 5.99e-08 & 4.16 & 5.54e-08 & 4.12 \\ 
\hline
\multirow{4}{*}{$P^4$}
 & 10 & 4.65e-06 & 0.00 & 4.72e-06 & 0.00 & 1.42e-06 & 0.00 & 2.25e-06 & 0.00  \\
 & 20 & 2.13e-07 & 4.45 & 1.37e-07 & 5.11 & 1.70e-07 & 3.06 & 1.69e-07 & 3.73 \\ 
 & 40 & 5.88e-09 & 5.18 & 5.77e-09 & 4.57 & 4.17e-09 & 5.35 & 4.76e-09 & 5.15 \\ 
 & 80 & 1.82e-10 & 5.01 & 3.47e-10 & 4.05 & 1.44e-10 & 4.85 & 1.55e-10 & 4.94 \\ 
\hline
\end{tabular}
\end{table}

\subsection*{Example 4.3: nonlinear convection: cnoidal wave}
We consider the following equation
\begin{align}
\label{eq3}
 u_t + \frac12(u^2)_x + \epsilon u_{xxx} = 0
\end{align}
on a unit interval $I=[0,1]$ with periodic boundary conditions.
Following \cite{Bona13},
we take $\epsilon = 1/24^2$ and choose the exact solution to be the following {\it cnoidal-wave} solution:
\[
 u(x,t) = a\,cn^2(4K(x-vt-x_0)),
\]
where $cn(z) = cn(z:m)$ is the Jacobi elliptic function with modulus $m\in (0,1)$ and the parameters
have the values $a=192m\epsilon K(m)^2$ and $v=64\epsilon (2m-1)K(m)^2$, $x_0=.5$.
The function $K=K(m)$ is the complete elliptic integral of the first kind. We use the value $m=0.9$. 

Table \ref{table:ac1d} lists the numerical errors and their orders for 
the three DG methods at $T=10 \Delta t_0$, where $\Delta t_0$ is the time step size on the coarsest mesh.
We use $P^k$ polynomials with $2\le k\le 4$ on a nonuniform mesh which is a $10\%$ random perturbation of the 
uniform mesh.

For Table \ref{table:ac1d}, similar conclusion as in Example 4.2  can be drawn.

\begin{table}[htbp]
\caption{\label{table:ac1d} 
The $L^2$-errors
and orders for Example 4.3 for the three DG methods
on a random mesh of $N$ cells. 
} \centering
\bigskip
\begin{tabular}{|c|c|cc|cc|cc|cc|}
\hline
 & &\multicolumn{2}{|c|}{(U)}&\multicolumn{2}{c|}{(C)}&\multicolumn{4}{c|}{(A)}\\
 \hline
 &  {$N$}      & $\|u-u_h\|$ & Order & $\|u-u_h\|$ & Order &  $\|u-u_h\|$ & Order 
 &  $\|\phi_h\|$  & Order\\
\multirow{4}{*}{$P^2$}
 & 10 & 6.57e-03 & 0.00 & 6.67e-03 & 0.00 & 8.73e-04 & 0.00 & 3.79e-03 & 0.00  \\
 & 20 & 2.21e-03 & 1.57 & 1.52e-03 & 2.13 & 1.77e-03 & -1.02 & 8.70e-04 & 2.12 \\ 
 & 40 & 2.90e-04 & 2.93 & 2.00e-04 & 2.93 & 1.63e-04 & 3.44 & 1.10e-04 & 2.99 \\ 
 & 80 & 3.61e-05 & 3.01 & 3.23e-05 & 2.63 & 1.93e-05 & 3.08 & 1.28e-05 & 3.10 \\ 
\hline
\multirow{4}{*}{$P^3$}
 & 10 & 5.51e-03 & 0.00 & 7.69e-03 & 0.00 & 6.30e-03 & 0.00 & 4.10e-03 & 0.00  \\
 & 20 & 1.44e-03 & 1.93 & 3.30e-03 & 1.22 & 1.67e-03 & 1.92 & 1.65e-03 & 1.32 \\ 
 & 40 & 9.99e-05 & 3.85 & 4.59e-04 & 2.84 & 9.48e-05 & 4.14 & 9.39e-05 & 4.13 \\ 
 & 80 & 6.17e-06 & 4.02 & 5.04e-05 & 3.19 & 5.37e-06 & 4.14 & 5.28e-06 & 4.15 \\ 
\hline
\multirow{4}{*}{$P^4$}
 & 10 & 2.20e-03 & 0.00 & 3.08e-03 & 0.00 & 1.33e-03 & 0.00 & 9.42e-04 & 0.00  \\
 & 20 & 7.28e-05 & 4.92 & 2.72e-04 & 3.50 & 1.40e-04 & 3.25 & 1.16e-04 & 3.02 \\ 
 & 40 & 2.71e-06 & 4.75 & 3.65e-06 & 6.22 & 3.68e-06 & 5.25 & 3.91e-06 & 4.90 \\ 
 & 80 & 8.47e-08 & 5.00 & 5.55e-07 & 2.72 & 1.28e-07 & 4.85 & 1.11e-07 & 5.14 \\ 
\hline
\end{tabular}
\end{table}

\subsection*{Example 4.4: long time simulation: cnoidal wave}
We consider the cnoidal wave problem in Example 4.3, and run the simulation with a much larger final  time.

We use both DG-$P^2$ space with 20 uniform cells, and DG-$P^3$ space with 10 uniform cells for the simulation.
The numerical results at time $T=5$ of the three DG methods are shown in  Figure \ref{fig:pw1d1}.
From this figure, 
we observe large dissipation and dispersion error for the method (U). We also observe 
poor resolution for the method (C), indicating that 
the mesh might be too coarse. Finally, we observe relatively the best result for the method (A) for both polynomial degrees.

\begin{figure}[ht!]
 \caption{Numerical solution at $T=5$ for Example 4.4.
 Top: method (U). Middle: method (C). Bottom: method (A).
 Left: DG-$P^2$ space, $20$ cells. Right: DG-$P^3$ space, $10$ cells.
 Solid line: numerical solution. Dashed line: exact solution.}
 \label{fig:pw1d1}
 \includegraphics[width=.45\textwidth]{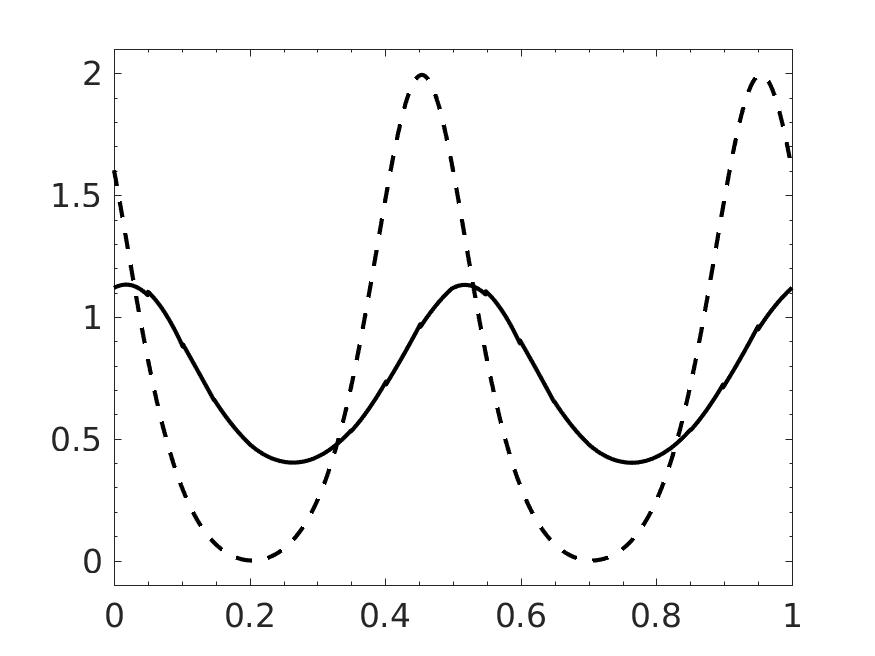}
 \includegraphics[width=.45\textwidth]{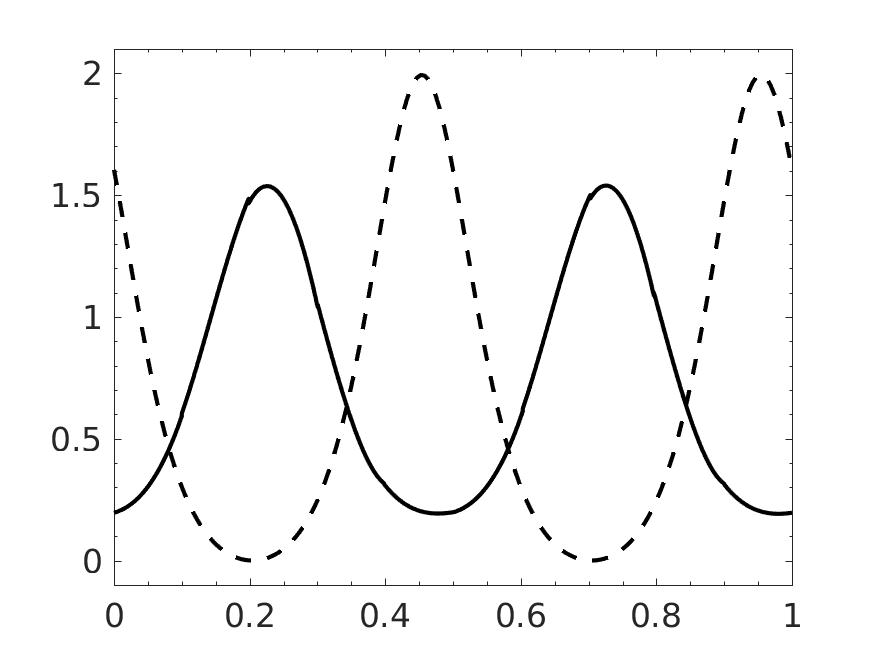}
 \includegraphics[width=.45\textwidth]{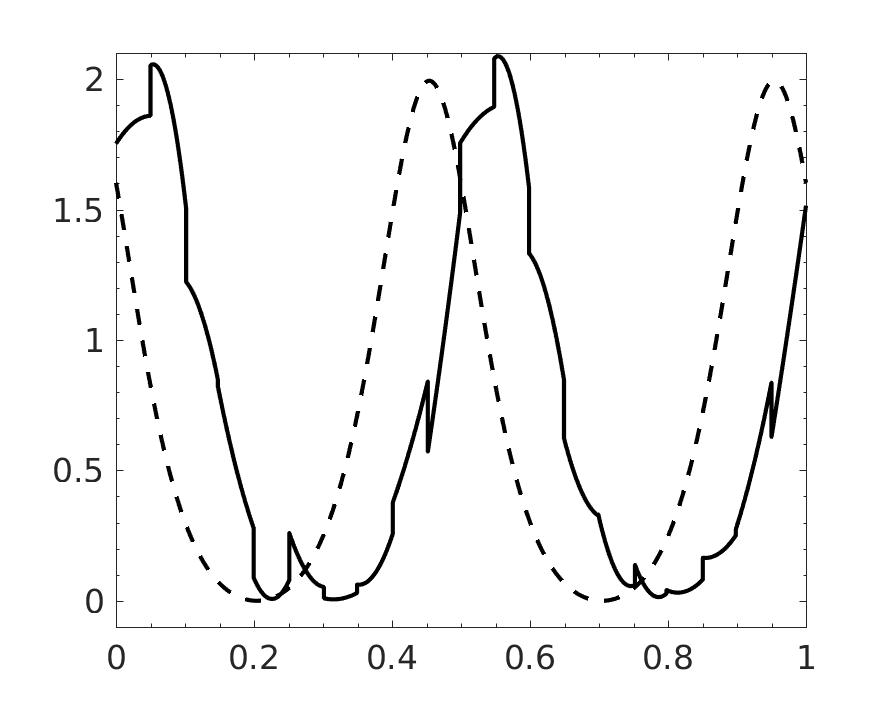}
 \includegraphics[width=.45\textwidth]{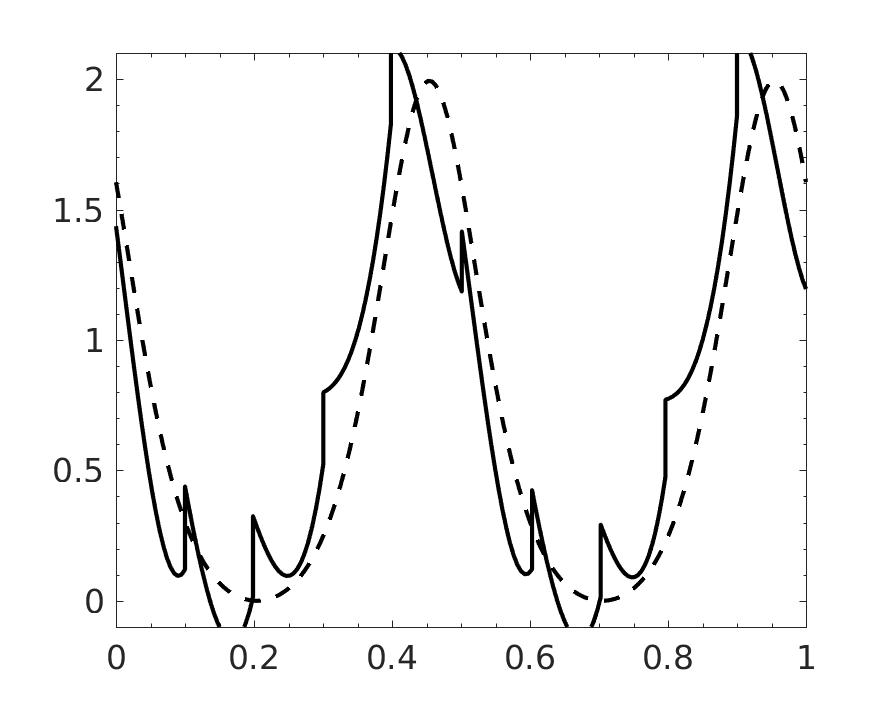}
 \includegraphics[width=.45\textwidth]{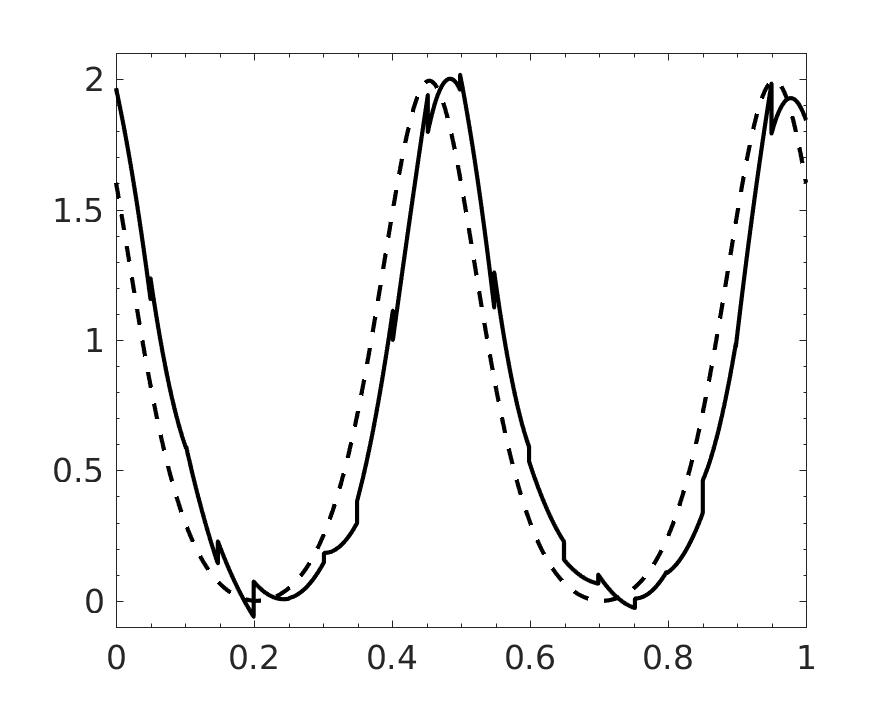}
 \includegraphics[width=.45\textwidth]{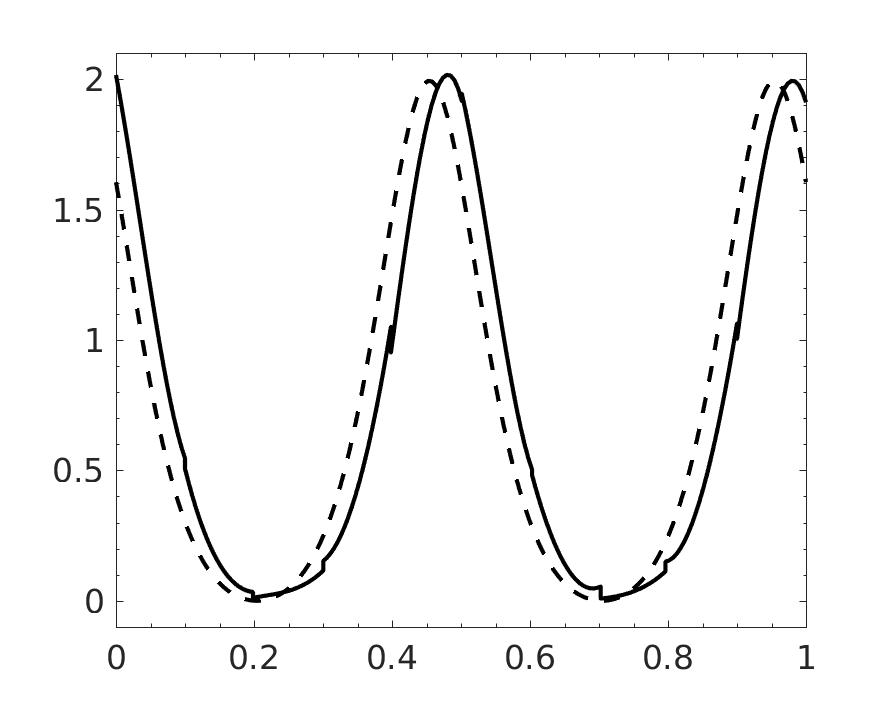}
\end{figure}

\section{Concluding remarks}
\label{sec:conclude}
In this paper, we have proposed an energy conserving DG method for the generalized KdV equation in one dimension.
The method is proven to be optimal convergent in the absence of the convection term, and 
is numerically shown to be optimally convergent on general non-uniform meshes.

Numerical comparison of the new method with the DG methods \cite{ChengShu08,Bona13} for long time simulations is also presented. 
The new method is found to be better than both methods \cite{ChengShu08, Bona13} 
in terms of the dissipation and dispersion errors.

Extension of the method to higher order odd-degree PDEs as those  considered 
in \cite{ChengShu08, XuShu10} poses no particular difficulty.
Extensions to multi-dimensions are also straight-forward.  Both will
be pursued in our future work.

\bibliographystyle{siam}

\end{document}